\documentclass[11pt]{amsart}
\usepackage{amsmath,amsthm,amssymb}
\usepackage{stmaryrd}
\usepackage[textsize=scriptsize,obeyFinal]{todonotes}
\usepackage{tikz-cd}

\newtheorem{theorem}{Theorem}[section]
\newtheorem*{theorem*}{Theorem}
\newtheorem{lemma}[theorem]{Lemma}
\newtheorem*{lemma*}{Lemma}
\newtheorem{proposition}[theorem]{Proposition}
\newtheorem*{proposition*}{Proposition}

\newtheorem*{corollary*}{Corollary}

\theoremstyle{definition}
\newtheorem{definition}[theorem]{Definition}
\newtheorem*{definition*}{Definition}
\newtheorem{example}[theorem]{Example}
\newtheorem*{example*}{Example}
\newtheorem{ques}[theorem]{Question}
\newtheorem*{ques*}{Question}

\newtheorem*{claim*}{Claim}
\newtheorem{conjecture}[theorem]{Conjecture}
\newtheorem*{conjecture*}{Conjecture}
\newtheorem{remark}[theorem]{Remark}
\newtheorem*{remark*}{Remark}

\newtheorem*{question*}{Question}

\usepackage{bm}
\newcommand{\ii}{\item}

\newcommand{\half}{\frac12}

\newcommand{\NN}{\mathbb N}

\newcommand{\ol}{\overline}

\usepackage{asymptote}
\begin{asydef}
	dotfactor *= 1.5;
\end{asydef}

\usepackage{amsaddr}
\usepackage{mathrsfs}

\title[A Family of Posets with Small Balance Constant]
{A Family of Partially Ordered Sets with Small Balance Constant}
\date{\today}
\author{Evan Chen}
\address{Department of Mathematics, Massachusetts Institute of Technology}
\email{evanchen@mit.edu}

\subjclass[2010]{06A07, 05A15}
\keywords{poset, 1/3-2/3 conjecture, linear extension}

\newcommand{\PP}{\mathcal P}
\DeclareMathOperator{\pr}{pr}

\begin{document}

\begin{abstract}
	Given a finite poset $\mathcal P$
	and two distinct elements $x$ and $y$,
	we let $\operatorname{pr}_{\mathcal P}(x \prec y)$
	denote the fraction of linear extensions of $\mathcal P$
	in which $x$ precedes $y$.
	The balance constant $\delta(\mathcal P)$
	of $\mathcal P$ is then defined by
	\[
		\delta(\mathcal P) = \max_{x \neq y \in \mathcal P}
		\min \left\{ \operatorname{pr}_{\mathcal P}(x \prec y),
		\operatorname{pr}_{\mathcal P}(y \prec x) \right\}.
	\]
	The $1/3$-$2/3$ conjecture asserts that $\delta(\mathcal P) \ge \frac13$
	whenever $\mathcal P$ is not a chain,
	but except from certain trivial examples
	it is not known when equality occurs,
	or even if balance constants can approach $1/3$.

	In this paper we make some progress on the conjecture
	by exhibiting a sequence of posets with balance constants
	approaching $\frac{1}{32}(93-\sqrt{6697}) \approx 0.3488999$,
	answering a question of Brightwell.
	These provide smaller balance constants
	than any other known nontrivial family.
\end{abstract}

\maketitle

\section{Introduction}
\subsection{Definitions}
Given a finite poset (partially ordered set) $\PP = (\PP, \le)$, and 
distinct elements $x,y \in \PP$,
we let $\pr_\PP(x \prec y)$ denote the proportion of linear extensions
of $\PP$ in which $x$ precedes $y$.
In particular, $\pr_\PP(x \prec y) + \pr_\PP(y \prec x) = 1$,
and if $x \le y$ in $\PP$ then $\pr_\PP(x \prec y) = 1$.

The \emph{balance constant} $\delta(\PP)$ is then defined by
\[
	\delta(\PP) = \max_{x \neq y \in \PP}
	\min \left\{ \pr_\PP(x \prec y), \pr_\PP(y \prec x) \right\}.
\]
(If $\PP$ consists of one element, we let $\delta(\PP) = 0$.)
Thus $\delta(\PP) \in [0, \half]$ for any finite poset $\PP$;
in fact $\delta(\PP) = 0$ exactly when $\PP$ is a chain.

\subsection{The $1/3$-$2/3$ Conjecture}
The main conjecture about balance constants is
the famous $1/3$-$2/3$ conjecture.
\begin{conjecture}
	[$1/3$-$2/3$ conjecture]
	If $\PP$ is a finite poset which is not a chain,
	then $\delta(\PP) \ge \frac 13$.
\end{conjecture}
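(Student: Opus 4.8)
The statement is the celebrated $1/3$-$2/3$ conjecture, which remains open in full generality; what follows is therefore not a complete argument but a description of the strongest known line of attack, together with the precise point at which it stalls.

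The first reduction is to note that since $\delta(\PP)$ is a maximum over pairs of a minimum of two complementary probabilities, it suffices to produce a single incomparable pair $x,y$ with $\min\{\pr_\PP(x \prec y), \pr_\PP(y \prec x)\} \ge \frac13$. One is thus free to choose the pair as conveniently as possible, and a natural choice is a critical pair: take $x$ and $y$ incomparable with $x$ as low and $y$ as high as the incomparability permits, so that the behavior of a random linear extension near this pair is as rigid as the poset allows.

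The analytic engine is the log-concavity of linear-extension counts. Fixing an element $x$ and letting $N_i$ denote the number of linear extensions in which exactly $i$ elements precede $x$, Stanley's inequality (a consequence of the Aleksandrov--Fenchel inequalities applied to the order polytope) gives $N_i^2 \ge N_{i-1}N_{i+1}$, so the sequence $(N_i)$ is log-concave. The plan is to encode $\pr_\PP(x \prec y)$ through such height distributions --- concretely, via the distribution of the signed gap between the heights of $x$ and $y$ --- and then exploit the fact that a log-concave distribution cannot be too lopsided. Since $x$ and $y$ are genuinely incomparable, both orderings carry positive mass, and combining this with the log-concave shape constraint yields a quantitative lower bound on the smaller of the two probabilities after a short optimization.

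The main obstacle is that this optimization falls short of $\frac13$: carried out directly it produces $\delta(\PP) \ge \frac{3}{11}$ (Kahn--Saks), and the sharpest refinement of the same method improves this only to $\frac{5-\sqrt5}{10} \approx 0.2764$ (Brightwell--Felsner--Trotter). The shortfall is genuine rather than an artifact of loose estimation: the extremal log-concave profiles permitted by Stanley's inequality are not realized by any actual poset, yet a one-variable log-concavity argument cannot exclude them. The natural way to sharpen the method is to pass to a two-variable (cross-product) log-concavity for the joint height distribution of $x$ and $y$, but the corresponding cross-product conjecture is itself open, so this route does not presently close the gap. A full proof of $\frac13$ thus appears to require genuinely new structural input about which height distributions arise from real posets --- which is exactly why the conjecture remains unproved, and why the present paper instead investigates how close to $\frac13$ balance constants can provably be driven from above.
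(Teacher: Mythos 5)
Your proposal is correct in the only sense available here: the statement you were given is the $1/3$-$2/3$ conjecture itself, which the paper states as a \emph{conjecture} and does not prove --- no proof of it exists in the literature, and the paper claims none. You rightly refused to manufacture a proof, and your summary of the partial progress matches what the paper's introduction records: the Kahn--Saks bound $\delta(\PP) \ge \frac{3}{11}$ via Stanley's log-concavity (Aleksandrov--Fenchel) argument, its refinement to $\frac{5-\sqrt5}{10} \approx 0.2764$ by Brightwell--Felsner--Trotter, and the fact that the cross-product route to closing the gap is itself open. You also correctly identify the paper's actual contribution as orthogonal to proving the conjecture: it constructs an explicit family $\PP(5k,5k)$ whose balance constants tend to $\kappa = \frac{1}{32}(93-\sqrt{6697}) \approx 0.3489$, thereby probing how small balance constants above $\frac13$ can be, rather than bounding $\delta(\PP)$ from below. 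There is no gap to flag, because there is no proof --- in the paper or anywhere else --- against which a gap could be measured.
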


This conjecture was first proposed in 1968 by Kislitsyn \cite{Kis68},
then again by Fredman in 1976 \cite{Fre76} and Linial \cite{Lin84}.
All three were motivated by the information-theoretic context
of comparison sorting,
but the problem is of course interesting in its own right.

The $1/3$-$2/3$ conjecture has been studied extensively.
The best bound which has been shown for all posets
is due to Brightwell, Felsner, and Trotter \cite{BFT95} in 1995,
who showed that
\[ \delta(\PP) \ge \frac{5-\sqrt5}{10} \approx 0.276393 \]
whenever $\PP$ is not a chain.
This improved a result of Kahn and Saks \cite{KS84} in 1984
which showed the weaker estimate $\delta(\PP) \ge \frac{3}{11} \approx 0.272727$.

While still open for general partially ordered sets,
the conjecture has been proven for several other families
of partially ordered sets,
for example posets of width $2$ by Linial \cite{Lin84}
and posets of height $2$ by Trotter, Gehrlein, Fishburn \cite{TGF92}.
In 2006, Peczarski described an even stronger conjecture,
the so-called ``gold partition conjecture'',
which implies the $1/3$-$2/3$ conjecture;
Peczarski proved this conjecture for posets
with at most $11$ elements \cite{Pec06},
and later for $6$-thin posets \cite{Pec08}.

An extensive survey on the problem is given by Brightwell \cite{BWsurvey},
which describes it as ``one of the major open problems
in the combinatorial theory of partial orders''.

\subsection{Posets with small balance constant}
The following example shows that the constant $1/3$
in best possible.
\begin{example}
	\label{ex:equality}
	Consider the poset $T$ with three elements $\{a,b,c\}$
	with the single relation $a \le b$ (shown in Figure~\ref{fig:T}).
	Then $\delta(T) = \frac 13$.
	\begin{figure}[ht]
		\centering
		\includegraphics{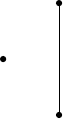}
		\caption{The poset $T$ with $\delta(T) = 1/3$.}
		\label{fig:T}
	\end{figure}
	It follows that linear sums of $T$ and the singleton poset
	have balance constant $1/3$.
\end{example}

However, other than this example,
little is known about the possible sets of balance constants.
For example, it is not known whether there are any other
posets which achieve a balance constant of exactly $1/3$,
other than those in the example above.
It is not even known whether balance constants
can be arbitrarily close to $1/3$.

In Brightwell's survey \cite[Section 4]{BWsurvey},
an example of partially ordered set with $A$
with $\delta(A) = \frac{16}{45} \approx 0.355556$ is given.
Brightwell also gives a family of partially ordered sets
with balance constant approaching
$\frac{7-\sqrt{17}}{8} \approx 0.359612$,
and asks the following two questions.
\begin{ques}
	Is there a poset with balance constant
	between $\delta(T) = \frac13$ and $\delta(A) = \frac{16}{45}$?
\end{ques}
\begin{ques}
	Is $\frac{7-\sqrt{17}}{8} \approx 0.359612$
	the lowest possible limit point other than $1/3$?
\end{ques}

Olson and Sagan \cite{Sagan} resolve the first question
by finding a poset $C$ with
\[ \delta(C) = \frac{37}{106} \approx 0.34905660 \]
which to the author's knowledge
is the smallest balance constant exceeding $1/3$ 
which appears in the literature.
This poset is shown in Figure~\ref{fig:C}.
\begin{figure}[ht]
	\centering
	\includegraphics{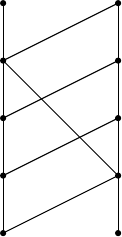}
	\caption{The poset $C$ from \cite[Figure 13]{Sagan}, with $\delta(C) = 37/106$.}
	\label{fig:C}
\end{figure}

The aim of this paper is to answer both questions
with a certain infinite family of partially ordered sets.
We will prove the following theorem.
\begin{theorem}
	There exists a sequence of posets whose balance constants approach
	\[ \kappa = \frac{1}{32} \left( 93 - \sqrt{6697} \right)
		\approx 0.34889999. \]
\end{theorem}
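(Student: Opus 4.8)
The plan is to exhibit an explicit infinite family of posets $\PP_n$, each obtained from $\PP_{n-1}$ by gluing on one more copy of a fixed finite ``gadget,'' and to show that $\delta(\PP_n) \to \kappa$. Since $\kappa$ is the smaller root of the quadratic $32\delta^2 - 186\delta + 61 = 0$ (equivalently $(93 - 32\kappa)^2 = 6697$), I expect it to arise as a limiting ratio of linear-extension counts governed by a fixed transfer matrix, so I would design the gadget so that consecutive copies remain only partially linked: crucially, the construction must not be a pure ordinal sum, since across an ordinal sum every cross-part pair is comparable and so contributes nothing balanced. Instead I would keep a distinguished pair $(x_n, y_n)$ of incomparable elements near the newest join, and arrange the family to be (nearly) of bounded width so that its linear extensions admit a clean recursive count.

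Next I would count linear extensions by the transfer-matrix method. Writing $e_n$ for the number of linear extensions of $\PP_n$ and $a_n$ for the number in which $x_n \prec y_n$, I would track a finite vector of such weighted counts and show, by deleting the topmost gadget and analysing how each linear extension of $\PP_{n-1}$ lifts, that this vector at stage $n$ equals a fixed matrix $M$ applied to the vector at stage $n-1$. All the relevant ratios, in particular $\pr_{\PP_n}(x_n \prec y_n) = a_n / e_n$, then converge as $n \to \infty$ to limits read off from the Perron eigenvector of $M$; because the dominant eigenvalue is itself a quadratic irrational, the limiting probability $p = \lim_n \pr_{\PP_n}(x_n \prec y_n)$ lies in the associated quadratic field and satisfies a quadratic equation, which I would solve to obtain $\min\{p,\,1-p\} = \kappa$.

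The hard part will be proving that the distinguished pair actually realises the balance constant, i.e.\ that $\delta(\PP_n) = \min\{\pr_{\PP_n}(x_n \prec y_n),\, \pr_{\PP_n}(y_n \prec x_n)\}$ rather than some other, more balanced comparison. Since $\delta$ is a maximum of minima, I must show that every incomparable pair $(u,v)$ in $\PP_n$ has $\min\{\pr_{\PP_n}(u \prec v),\, \pr_{\PP_n}(v \prec u)\}$ no larger than the value attained by $(x_n, y_n)$, while comparable pairs contribute $0$. I would attack this using the self-similarity of the construction: pairs lying inside a single gadget, or straddling widely separated gadgets, should be markedly more lopsided than the central pair, and a monotonicity or interlacing argument on the transfer recurrence should confine the most balanced comparison to the newest join. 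Making this confinement uniform in $n$, together with a clean enumeration of which pairs can possibly compete, is where the genuine difficulty lies; by contrast the linear-extension recursions above are in principle routine once the gadget is pinned down precisely.
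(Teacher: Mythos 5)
Your high-level methodology is in fact the paper's methodology: the paper's family is a self-similar, width-$2$ construction (two chains glued by a periodic pattern of cross-relations, i.e.\ a repeated ``gadget'' of period $5$), the extension counts satisfy a fixed linear recurrence $E(m+10,n+10) = 164\,E(m+5,n+5) - 27\,E(m,n)$ whose characteristic roots $\theta,\ol\theta = 82 \pm \sqrt{6697}$ play the role of your Perron eigenvalue, and the distinguished pair sits at the very top of the newest gadget; your algebra $32\kappa^2 - 186\kappa + 61 = 0$ is also correct. But there is a genuine gap, and it is the central one: you never exhibit the gadget. The entire content of the theorem lives in the specific construction --- here, chains $a_1 < a_2 < \cdots$ and $b_1 < b_2 < \cdots$ with $a_i \le b_{i+1}$ for $i \equiv 1,2,3,4 \pmod 5$ and $b_j \le a_{j+2}$ for $j \equiv 0,2,4 \pmod 5$, restricted to the bottom $5k$ elements of each chain. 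The constant $\kappa$ is an \emph{output} of that choice, not a target one can design toward: knowing $\kappa$ lies in $\QQ(\sqrt{6697})$ does not reverse-engineer a poset, since the limiting probability is a ratio of eigenvector coordinates (e.g.\ $\lim_k E(5k,5k-1)/E(5k,5k) = \frac{125+\sqrt{6697}}{2(77+\sqrt{6697})} = 1-\kappa$), and these coordinates depend on all the fine structure of the gadget, not merely on the eigenvalue. A proposal of the form ``choose the gadget so that the computation yields $\kappa$'' has the logical direction reversed; without a concrete family there is nothing to compute or verify.

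The second gap is the step you yourself flag as ``where the genuine difficulty lies'': showing no other incomparable pair is more balanced than the distinguished one. You defer this to an unspecified ``monotonicity or interlacing argument,'' but that is not a proof, and it is also not how the paper resolves it. The paper's key device is conditioning plus factorization: for a cross pair involving $a_i$, one conditions on which gap of the $b$-chain the element $a_i$ occupies in a linear extension; imposing that placement as an order relation collapses $\PP(5k,5k)$ into a linear sum of a smaller member of the same family and an order-inverted copy of another member, so each conditional count factors as a product $E(\cdot,\cdot)\,E(\cdot,\cdot)$, and the explicit closed forms then show every such pair has $\min\{\pr,1-\pr\} \le \kappa$ (in fact strictly below $\tfrac13$ except for the extremal mod-$5$ residue class). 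This conditioning/factorization idea is what makes the ``hard part'' finite --- a case analysis over residues of $i$ and $j$ modulo $5$ --- and it is absent from your sketch. Note also that because the family has width $2$, every incomparable pair is a cross pair $(a_i,b_j)$, which is precisely what makes the ``clean enumeration of which pairs can possibly compete'' that you ask for actually available; in a generic glued-gadget construction of larger width, that enumeration could be genuinely unmanageable.
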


\subsection{Roadmap}
The rest of the paper is divided as follows.
In Section~\ref{sec:setup} we introduce the main players in our proof,
and introduce the notation which we will need for the construction.
Section~\ref{sec:enum} then provides explicit formulas
for the number of linear extensions of our family of posets,
and finally in Section~\ref{sec:horror} we compile these results together
to prove the main theorem.

\subsection*{Acknowledgments}
This research was funded by NSF grant 1659047,
as part of the 2017 Duluth Research Experience for Undergraduates (REU).
The author thanks Joe Gallian for supervising the research,
and for suggesting the problem.
The author is also grateful to Joe Gallian 
for his comments on drafts of the paper.

\section{Setup}
\label{sec:setup}
\begin{definition}
	Throughout the paper let
	$\kappa = \frac{1}{32} \left( 93 - \sqrt{6697} \right)$.
\end{definition}

We first define a ``master poset''
from which our construction will derive.
\begin{definition}
	Let $\PP_\infty$ denote the partially ordered set
	whose elements consist of two infinite $\NN$-indexed chains
	\begin{align*}
		a_1 < a_2 < a_3 < & \cdots \\
		b_1 < b_2 < b_3 < & \cdots
	\end{align*}
	together with the additional covering relations that
	\begin{itemize}
		\ii $a_i \le b_{i+1}$ whenever $i \equiv 1,2,3,4 \pmod 5$, and
		\ii $b_j \le a_{j+2}$ whenever $j \equiv 0,2,4 \pmod 5$.
	\end{itemize}
\end{definition}

All our constructions will be obtained by taking
the bottom-most elements of either chain.
\begin{definition}
	For positive integers $m$ and $n$
	we let $\PP(m,n)$ denote the sub-poset $\PP_\infty$
	induced by taking the elements $\{a_1, \dots, a_m, b_1, \dots, b_n\}$.
\end{definition}
The example $\PP(15, 15)$ is shown in Figure~\ref{fig:15}.

\begin{figure}[ht]
	\centering
	\includegraphics{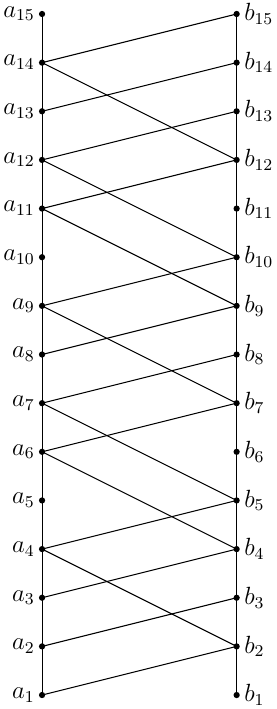}
	\caption{A picture of $\PP(15,15)$.}
	\label{fig:15}
\end{figure}

Our main result is the following.
\begin{theorem}
	As $k \to \infty$,
	\[ \delta\left( \PP(5k, 5k) \right) \to \kappa. \]
\end{theorem}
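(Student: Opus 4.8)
The plan is to reduce the computation of $\delta(\PP(5k,5k))$ to the asymptotics of ratios of linear-extension counts. Write $N(\PP)$ for the number of linear extensions of $\PP$, and $N(\PP; x \prec y)$ for the number placing $x$ before $y$, so that $\pr_\PP(x \prec y) = N(\PP; x \prec y)/N(\PP)$ and the contribution of a pair $(x,y)$ to $\delta$ is $\min\{\pr_\PP(x\prec y),\, \pr_\PP(y \prec x)\}$. Since $\delta$ is the maximum over pairs, I would first argue that the maximizing pair is an incomparable pair $(a_i, b_j)$ straddling the two chains near the top of $\PP(5k,5k)$: pairs deep inside the poset are pinned by the dense web of cover relations and are badly unbalanced, while the period-$5$ structure of $\PP_\infty$ makes the local behaviour at the top stabilize as $k \to \infty$. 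I expect the critical pair to be fixed relative to the top, of the form $(a_{5k-r}, b_{5k-s})$ for small constants $r,s$ dictated by the residues in the definition of the cover relations.

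Next I would use the explicit enumeration of Section~\ref{sec:enum}. Because the cover relations repeat with period $5$, the counts $N(\PP(m,n))$ satisfy a linear recurrence driven by a transfer matrix $M$ built from the boundary profile of consumed elements of the two chains, with one period in each chain contributing a fixed $M$ (a product of five elementary steps). Consequently $N(\PP(5k,5k)) \sim c\,\lambda^{k}$ for the dominant eigenvalue $\lambda$ of $M$, and, crucially, the two sub-counts $N(\PP(5k,5k); a_i \prec b_j)$ and $N(\PP(5k,5k); b_j \prec a_i)$ for the critical pair grow at the same rate $\lambda^{k}$. Their ratio therefore tends to a ratio of components of the dominant eigenvector of $M$; equivalently, tracking the top profile under $M$ from one period to the next, the relevant ratio converges to a fixed point of the M\"obius transformation induced by $M$.

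A fixed point of a M\"obius map with integer data is a quadratic irrational, which is exactly the source of the square root in $\kappa$: carrying out the computation, the limiting value of $\pr_\PP$ for the critical pair satisfies a quadratic equation, and selecting the root lying in $[0,\half]$ gives $\min\{\pr_\PP(a_i\prec b_j),\,\pr_\PP(b_j \prec a_i)\} \to \kappa$, the relevant root of $32x^2 - 186x + 61 = 0$. Finally I would combine the lower bound (the critical pair forces $\delta(\PP(5k,5k)) \ge \kappa - o(1)$) with a matching upper bound, checking that no other incomparable pair exceeds $\kappa$ in the limit. I expect this upper bound to be the main obstacle: it requires controlling the minority probability uniformly over all $\Theta(k^2)$ pairs, for which I would exploit the chain structure and the monotonicity of the $\pr_\PP$ values to reduce to a bounded family of candidate pairs near the top, and then compare each against $\kappa$ using the same transfer-matrix asymptotics. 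The remaining difficulty is purely computational --- the ``horror'' of extracting the exact algebraic limit from the period-$5$ transfer matrix --- which is the content of Section~\ref{sec:horror}.
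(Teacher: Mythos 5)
Your overall framework matches the paper's: the counts $E(m,n)$ of linear extensions do satisfy a linear recurrence (the paper proves $E(m+10,n+10)=164\,E(m+5,n+5)-27\,E(m,n)$ for admissible pairs, with characteristic roots $\theta,\ol{\theta}=82\pm\sqrt{6697}$), the limiting probability for the extreme pair is indeed a quadratic irrational, and your quadratic $32x^2-186x+61=0$ is exactly the right one, with $\kappa$ its root in $[0,\tfrac12]$. Your lower-bound half is essentially the paper's: $\pr_{\PP}(a_{5k}\prec b_{5k})=E(5k,5k-1)/E(5k,5k)\to 1-\kappa$ follows directly from the closed forms. The genuine gaps are all in the upper bound, which you correctly identify as the main obstacle but for which your proposed mechanism would fail.

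First, you overlook the structural fact that makes the upper bound a finite computation: incomparability in $\PP(5k,5k)$ is \emph{local}. Any two elements of the same chain are comparable, and $a_i$ is incomparable to $b_j$ for at most four values of $j$ (e.g.\ $b_{5t-1}\le a_{5t+1}\le b_{5t+2}$, so $a_{5t+1}$ is incomparable only to $b_{5t}$ and $b_{5t+1}$). So only $O(k)$ pairs contribute, not $\Theta(k^2)$, and they fall into finitely many types indexed by $i\bmod 5$. Second, your reduction ``by monotonicity of the $\pr_\PP$ values to a bounded family of candidate pairs near the top'' rests on a false premise: $\PP(5k,5k)$ is self-dual (flip it upside down via $a_i\mapsto b_{5k+1-i}$, $b_j\mapsto a_{5k+1-j}$), so the bottom pair $(a_1,b_1)$ is exactly as balanced as the top pair $(a_{5k},b_{5k})$ --- both approach $\kappa$ --- while the paper's computations show that all interior pairs have minority probability below $\tfrac13$. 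Balance is high at both ends and low in the middle, so no monotone reduction to the top can work. Third, and most importantly, you have no mechanism for evaluating $\pr_{\PP}(a_i\prec b_j)$ for pairs in the middle of the poset: your eigenvector-ratio argument only handles counts of the form $E(m,n)$, i.e.\ the pair at the very top. The paper's key device is a ``cut'' factorization: for $i=5t+1$ it conditions on which gap of the $b$-chain the element $a_{5t+1}$ occupies (three cases, since $b_{5t-1}\le a_{5t+1}\le b_{5t+2}$); adding the corresponding relations turns $\PP$ into a linear sum of a bottom poset $\PP(\cdot,\cdot)$ and an inverted copy of another $\PP(\cdot,\cdot)$, so each conditional count is a \emph{product} of two closed-form values, e.g.\ $E(5t,5t)E(5s,5s-1)$ with $s=k-t$, and the resulting probabilities are uniform in $t$. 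Without this factorization (or a substitute for it), the uniform upper bound over all incomparable pairs --- the bulk of the paper's Section~\ref{sec:horror} --- cannot be carried out.
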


To approach this result, we introduce further notation.
\begin{definition}
	Let $E(m,n)$ denote the number of
	linear extensions of $\PP(m,n)$.
	For convenience we let $E(0,n) = E(m,0) = 1$ for
	positive integers $m$ and $n$,
	but we leave $E(0,0)$ undefined.
\end{definition}
Then $E(m,n)$ may be computed recursively in the following way.
\begin{proposition}
	\label{prop:down}
	For positive integers $m$ and $n$, we have
	\[ E(m,n) = \begin{cases}
			E(m-1, n) & a_m > b_n \\
			E(m, n-1) & a_m < b_n \\
			E(m-1,n) + E(m,n-1) & \text{otherwise}.
		\end{cases} \]
\end{proposition}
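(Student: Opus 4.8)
The plan is to use the standard recursive principle for counting linear extensions: in any linear extension of a finite poset the last element must be a maximal element, and restricting to those extensions that end in a fixed maximal element $z$ gives a bijection with the linear extensions of the poset obtained by deleting $z$. Consequently the number of linear extensions of $\PP(m,n)$ equals the sum, over all maximal elements $z$, of the number of linear extensions of $\PP(m,n)$ with $z$ removed. The entire proposition then reduces to two tasks: (i) identifying the maximal elements of $\PP(m,n)$, and (ii) checking that deleting such a maximal element recovers either $\PP(m-1,n)$ or $\PP(m,n-1)$.

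First I would observe that the maximal elements of $\PP(m,n)$ can only be $a_m$ or $b_n$: every $a_i$ with $i < m$ lies below $a_m$ along the $a$-chain, and every $b_j$ with $j < n$ lies below $b_n$, so no other element is maximal. Since $\PP(m,n)$ is by definition the induced subposet of $\PP_\infty$ on $\{a_1,\dots,a_m,b_1,\dots,b_n\}$, deleting the top element $a_m$ leaves exactly the induced subposet on $\{a_1,\dots,a_{m-1},b_1,\dots,b_n\}$, which is $\PP(m-1,n)$, and deleting $b_n$ yields $\PP(m,n-1)$. These are well defined and correctly counted even in the boundary cases $m-1 = 0$ or $n-1 = 0$, by the convention $E(0,n) = E(m,0) = 1$; moreover $E(0,0)$ is never invoked, since in each branch at least one index stays positive.

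Next I would determine which of $a_m$ and $b_n$ are actually maximal by analyzing the cross-relations. Because $a_m$ tops the $a$-chain, it fails to be maximal exactly when some $b_j$ with $j \le n$ lies above it; tracing the covering relations, the only way upward out of $a_m$ is $a_m \le b_{m+1} \le \cdots \le b_n$, so $a_m$ is non-maximal precisely when $a_m < b_n$. Symmetrically, $b_n$ is non-maximal precisely when $b_n < a_m$. Since $a_m \neq b_n$, exactly one of the three mutually exclusive cases holds, matching the statement: if $a_m > b_n$ then $a_m$ is the unique maximal element and $E(m,n) = E(m-1,n)$; if $a_m < b_n$ then $b_n$ is the unique maximal element and $E(m,n) = E(m,n-1)$; and if $a_m$ and $b_n$ are incomparable then both are maximal and the two contributions add, giving $E(m,n) = E(m-1,n) + E(m,n-1)$.

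I expect the only delicate point is the cross-relation bookkeeping in the preceding paragraph: one must confirm that no relation of $\PP_\infty$ can force an element other than $a_m$ or $b_n$ to the top, and that the comparabilities between $a_m$ and a $b_j$ (or between $b_n$ and an $a_i$) arise only from the explicit covering relations followed upward along a single chain. In particular there is no ``zig-zag'' inside $\PP(m,n)$ creating an unexpected comparability, since climbing from a $b$ back up to an $a$ via $b_j \le a_{j+2}$ (or from an $a$ to a higher $b$) would require an index exceeding $m$ or $n$ and hence an element absent from $\PP(m,n)$. Once this is verified, the three cases of the recursion fall out immediately from the maximal-element count.
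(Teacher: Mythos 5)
Your proof is correct in substance and takes exactly the same route as the paper, whose entire proof is the observation that in a linear extension of $\PP(m,n)$ the last element must be $a_m$ or $b_n$, with the recursion following by cases; you have simply spelled out the deletion bijection and the maximality analysis.

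One caveat worth correcting: the ``zig-zag'' discussion in your final paragraph rests on a false premise. Since $\PP(m,n)$ is an \emph{induced} subposet, its order relation is inherited wholesale from $\PP_\infty$; a comparability in $\PP(m,n)$ need not be witnessed by a chain of covering relations lying inside $\PP(m,n)$. For instance, in $\PP(5,7)$ one has $a_5 < b_7$, because $a_5 < a_6 \le b_7$ holds in $\PP_\infty$ even though $a_6$ is absent from the subposet; consequently $b_7$ is the unique maximal element of $\PP(5,7)$ (consistent with $E(5,7) = E(5,6) = 143$ in the appendix), contrary to what your ``no path inside $\PP(m,n)$'' reasoning would predict. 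The same issue infects your claim that the only way upward out of $a_m$ is $a_m \le b_{m+1} \le \cdots \le b_n$. Fortunately, none of this is needed for the proposition, which cases on the comparability of $a_m$ and $b_n$ without ever characterizing when each case occurs: all you need is that any element of $\PP(m,n)$ strictly above $a_m$ must be some $b_j$ with $j \le n$, whence $a_m < b_j \le b_n$ by transitivity, so $a_m$ is non-maximal if and only if $a_m < b_n$ (and symmetrically for $b_n$). With your final paragraph replaced by this one-line transitivity argument, the proof is complete; as written, that paragraph would lead you astray later, e.g.\ when verifying the lemma characterizing admissible pairs.
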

\begin{proof}
	In a linear extension of $\PP(m,n)$,
	either $a_m$ or $b_n$ must be the maximal element,
	and so the recursion follows by considering cases on this.
\end{proof}

According to Proposition~\ref{prop:down},
the interesting cases are those for which $\PP(m,n)$ has no maximal element.
To this end, we introduce the following terminology.
\begin{definition}
	We say the pair $(m,n)$ of positive integers is \emph{admissible}
	if $\PP(m,n)$ has no maximal element.
\end{definition}
One can in fact characterize all the admissible pairs exactly.
We obtain, essentially by definition,
the following characterization.
\begin{lemma}
	The pair $(m,n)$ is admissible if and only if
	it is one of the following forms:
	\begin{enumerate}
		\ii $m=5k+4$ and $n \in \{5k+3, 5k+4\}$.
		\ii $m=5k+3$ and $n \in \{5k+1, 5k+2, 5k+3\}$.
		\ii $m=5k+2$ and $n \in \{5k+1, 5k+2\}$.
		\ii $m=5k+1$ and $n \in \{5k, 5k+1\}$.
		\ii $m=5k$ and $n \in \{5k-2, 5k-1, 5k, 5k+1\}$.
	\end{enumerate}
\end{lemma}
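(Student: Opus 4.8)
The plan is to determine exactly when $\PP(m,n)$ fails to have a maximal element, and then translate this failure condition into the explicit list of $(m,n)$ pairs. Since $\PP(m,n)$ is the sub-poset of $\PP_\infty$ on $\{a_1,\dots,a_m,b_1,\dots,b_n\}$, an element is maximal precisely when nothing in this finite set covers it. The two chains force $a_i < a_{i+1}$ and $b_j < b_{j+1}$, so the only candidates for maximal elements are the two tops $a_m$ and $b_n$. Thus $\PP(m,n)$ has \emph{no} maximal element if and only if \emph{neither} $a_m$ nor $b_n$ is maximal, meaning each of them lies below something else still present in the poset. I would first record this reduction precisely: $(m,n)$ is admissible iff $a_m$ is below some retained element and $b_n$ is below some retained element.

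Next I would analyze the cross-relations. The element $a_m$ can only be dominated by some $b_j$ with $j \le n$, and by the definition of $\PP_\infty$ the relevant covering is $a_m \le b_{m+1}$, which holds exactly when $m \equiv 1,2,3,4 \pmod 5$. Hence $a_m$ is non-maximal iff $m \not\equiv 0 \pmod 5$ and the index $m+1 \le n$, \emph{or} $a_m$ sits below $b_{m+1}$ through a chain of relations that are nonetheless present. Symmetrically, $b_n$ is dominated via $b_n \le a_{n+2}$ precisely when $n \equiv 0,2,4 \pmod 5$, so $b_n$ is non-maximal iff $n \equiv 0,2,4 \pmod 5$ and $n+2 \le m$. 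I would carefully extract the inequalities $m+1 \le n$ and $n+2 \le m$ (together with the congruence conditions) as the joint requirement for admissibility, being attentive to the fact that the covering relations in $\PP_\infty$ are the \emph{generating} relations, so one should check whether any \emph{other} retained element could dominate $a_m$ or $b_n$ through transitivity.

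The main obstacle, and the step demanding the most care, is verifying that transitive consequences introduce no additional ways for $a_m$ or $b_n$ to be non-maximal beyond the two direct cross-relations, and conversely that the listed congruence-and-inequality conditions are simultaneously satisfiable only in the five stated families. Concretely, one must confirm that when $m \equiv 0 \pmod 5$, the element $a_m$ truly has nothing above it among $a_1,\dots,a_m,b_1,\dots,b_n$ for the relevant ranges of $n$, and similarly for $b_n$ when $n \equiv 1,3 \pmod 5$. I expect this to reduce to a finite check over the residues $m \bmod 5 \in \{0,1,2,3,4\}$, pairing each with the admissible residues and ranges of $n$ dictated by the two inequalities $m+1 \le n \le m$ (for $a_m$ non-maximal, forcing $n \in \{m+1\}$ when possible, though the chain structure widens this) and $n+2 \le m$ (for $b_n$ non-maximal).

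Finally I would assemble the casework into the enumerated list. Writing $m = 5k + r$ and letting the two domination conditions cut out the allowable $n$, each residue $r \in \{0,1,2,3,4\}$ yields one line of the lemma, with the $m = 5k$ case producing the widest window $n \in \{5k-2,5k-1,5k,5k+1\}$ because both the congruence $n \equiv 0,2,4$ and the inequality $n + 2 \le m = 5k$ conspire favorably there. Matching the resulting intervals against the five families stated in the lemma completes the argument; since the phrase ``essentially by definition'' in the excerpt signals that no deep structure is needed, I would present the verification as a direct, residue-by-residue unwinding of the covering relations rather than invoking any auxiliary machinery.
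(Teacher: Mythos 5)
There is a genuine gap, and it is fatal to the argument as outlined: your reduction demands that \emph{both} $a_m$ and $b_n$ lie \emph{below} some retained element, which translates into the joint requirement $n \ge m+1$ (some $b_j$ with $j \le n$ above $a_m$) and $m \ge n+2$ (some $a_i$ with $i \le m$ above $b_n$). These two inequalities are mutually contradictory, since together they give $m \ge m+3$; so your criterion declares that \emph{no} pair is admissible, contradicting the very list you are trying to prove (and the bolded entries of Appendix~\ref{app:smallcase}, e.g.\ $(1,1)$). Indeed you write the condition ``$m+1 \le n \le m$'' at one point, which is already empty, and the parenthetical ``though the chain structure widens this'' papers over the contradiction rather than resolving it. The source of the error is reading ``no maximal element'' literally: every finite nonempty poset has a maximal element, so that reading is vacuous. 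What the definition means --- as Proposition~\ref{prop:down} and the appendix table make clear ($\PP(1,1)$ is a two-element antichain, admissible, yet both its elements are maximal) --- is that $\PP(m,n)$ has no \emph{maximum} (greatest) element, equivalently that $a_m$ and $b_n$ are \emph{incomparable}.

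With the correct criterion the inequalities flip from lower bounds to upper bounds, and your residue-by-residue plan then works. Since every defining relation of $\PP_\infty$ strictly increases the subscript (chain steps and $a_i \le b_{i+1}$ by $1$, and $b_j \le a_{j+2}$ by $2$), the least $j$ with $a_m < b_j$ is $m+1$ when $m \not\equiv 0 \pmod 5$ and $m+2$ when $m \equiv 0 \pmod 5$, while the least $i$ with $b_n < a_i$ is $n+2$ when $n \equiv 0,2,4 \pmod 5$ and $n+3$ when $n \equiv 1,3 \pmod 5$. Admissibility is the conjunction of the \emph{negations} of the two comparabilities: $n \le m$ (respectively $n \le m+1$ when $m \equiv 0$), and $m \le n+1$ (respectively $m \le n+2$ when $n \equiv 1,3$). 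Checking the five residues of $m$ against these windows yields exactly the five families of the lemma. One caution your proposal raises correctly but must be applied to this criterion: comparability in the induced subposet is inherited from $\PP_\infty$ even through deleted elements (e.g.\ $a_{5k} < a_{5k+1} < b_{5k+2}$ forces $a_{5k} < b_{5k+2}$ in $\PP(5k,5k+2)$ although $a_{5k+1}$ is absent), which is precisely why the $m \equiv 0$ and $n \equiv 1,3$ cases have windows shifted by one.
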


\begin{remark}
	Note that this means that $(m,n)$ is admissible
	if and only if $|m-n| \le 5$,
	in which case only the residues $m \pmod 5$, $n \pmod 5$
	are relevant.
	In particular, if $(m,n)$ is admissible
	then so is $(m+5, n+5)$.
\end{remark}

\section{Enumeration}
\label{sec:enum}
We now proceed to give explicitly compute $E(m,n)$ using induction.
Several base cases are needed for this proof; we do not address these here,
but simply record the results in Appendix~\ref{app:smallcase}.

In order to make this possible, we make the following observation.

\begin{lemma}
	If $(m,n)$ is admissible then
	\[ E(m+10, n+10) = 164 E(m+5, n+5) - 27 E(m,n). \]
\end{lemma}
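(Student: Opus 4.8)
$$E(m+10, n+10) = 164\, E(m+5, n+5) - 27\, E(m,n)$$
for admissible $(m,n)$.

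Let me think about how I'd prove this.The plan is to express all three quantities $E(m+10,n+10)$, $E(m+5,n+5)$, and $E(m,n)$ in terms of a common, finite vector of ``boundary'' linear-extension counts and then verify the linear relation by a matrix computation. The key observation, recorded in the remark just above, is that admissibility depends only on the residues $m,n \pmod 5$ and on $|m-n|\le 5$, so shifting $(m,n)\mapsto(m+5,n+5)$ preserves admissibility and the local combinatorial structure of $\PP_\infty$ near the top of the two chains. This strongly suggests that there is a fixed-size transfer operator governing how the extension counts evolve as we add five elements to each chain, and that $164$ and $27$ are (up to sign) the coefficients of its characteristic polynomial $\lambda^2 - 164\lambda + 27$ restricted to the relevant invariant subspace.

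First I would set up the transfer-matrix framework. Using Proposition~\ref{prop:down}, peeling off the topmost elements $a_{m+5},\dots,a_{m+1}$ and $b_{n+5},\dots,b_{n+1}$ one at a time expresses $E(m+5,n+5)$ as an explicit nonnegative integer combination of the ``smaller'' counts $E(m',n')$ for $(m',n')$ ranging over the admissible boundary states reachable from $(m,n)$. Because admissibility forces $|m'-n'|\le 5$, there are only finitely many residue-classes of boundary states, so this peeling defines a fixed square matrix $M$ (depending only on residues mod $5$) with
\[
	\mathbf{v}(m+5,n+5) = M\,\mathbf{v}(m,n),
\]
where $\mathbf{v}(m,n)$ is the vector of admissible boundary counts at level $(m,n)$. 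I would compute $M$ explicitly by carefully tracking the covering relations $a_i \le b_{i+1}$ (for $i\equiv 1,2,3,4$) and $b_j \le a_{j+2}$ (for $j\equiv 0,2,4$) across one period of length $5$; this is the bookkeeping-heavy but routine part.

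Next I would identify the two-dimensional invariant behavior. Iterating gives $\mathbf{v}(m+10,n+10)=M^2\,\mathbf{v}(m,n)$, and by the Cayley--Hamilton theorem $M$ satisfies its own characteristic polynomial. The claim is equivalent to showing that, on the relevant component, $M^2 = 164\,M - 27\,I$, i.e.\ that $\lambda^2-164\lambda+27$ annihilates the vector $\mathbf{v}(m,n)$. Rather than diagonalizing the full matrix, I would project onto the coordinate extracting $E$ itself and verify the scalar recurrence directly, checking that the trace and determinant of the $2\times 2$ block through which $E$ propagates are $164$ and $27$ respectively. Concretely, I would show
\[
	E(m+10,n+10) - 164\,E(m+5,n+5) + 27\,E(m,n) = 0
\]
by substituting the peeling formulas for each term and collecting coefficients of the shared boundary counts $E(m,n), E(m-1,n),\dots$, confirming they all cancel.

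The main obstacle will be the combinatorial bookkeeping in computing $M$: the asymmetric relations (four congruence classes sending $a_i$ up to $b_{i+1}$ but only three sending $b_j$ up to $a_{j+2}$, with a shift of $2$ rather than $1$) mean the peeling order branches differently depending on the residues of $m$ and $n$, so I expect to handle several cases and must be careful that each admissible base case from Appendix~\ref{app:smallcase} is used consistently. Once $M$ is pinned down correctly, the eigenvalue identity $\lambda^2-164\lambda+27$ (whose roots have product $27$ and sum $164$) should fall out mechanically, and the constant $\kappa=\frac{1}{32}(93-\sqrt{6697})$ of the main theorem will later emerge from the dominant eigenvalue of exactly this recurrence.
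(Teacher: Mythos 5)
Your proposal is correct, and it takes a genuinely different route from the paper. The paper's own proof is deliberately low-tech: it checks the identity numerically for admissible pairs with small indices (using the table in Appendix~\ref{app:smallcase}) and then propagates it by induction on $m+n$ via Proposition~\ref{prop:down}, together with the fact that which case of that recursion applies is invariant under $(m,n)\mapsto(m+5,n+5)$; the constants $164$ and $27$ are taken as given, their correctness being certified by the verified base cases. Your transfer-matrix argument instead \emph{derives} these constants. Indeed, if you carry out the peeling, every admissible count at level $k+1$ turns out to be a fixed nonnegative integer combination of only three counts at level $k$ --- namely $E(5k+4,5k+4)$, $E(5k+4,5k+3)$ and $E(5k+3,5k+2)$ --- and the last two occur only through their sum, so the dynamics genuinely factor through a two-dimensional space, on which the transfer acts by
\[ \begin{pmatrix} 111 & 61 \\ 96 & 53 \end{pmatrix}, \]
with trace $164$ and determinant $27$ (e.g.\ $E(9,9)=111\cdot 37+61\cdot(23+9)=6059$); Cayley--Hamilton for this $2\times 2$ matrix then gives the recurrence for every coordinate, regardless of initial data. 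Your approach buys an explanation of where $t^2-164t+27$ comes from and essentially eliminates the need for the large table; the paper's buys brevity. One caution on execution: Cayley--Hamilton applied to the full transfer matrix yields a polynomial of its full degree, not degree $2$, so the existence of the $2\times2$ block is not a formal consequence of the setup --- exhibiting it (equivalently, carrying out your ``coefficients all cancel'' check against an \emph{independent} set of boundary counts, since $E(m,n)$, $E(m-1,n)$, $E(m,n-1),\dots$ satisfy linear relations of their own) is where the real content of your proof lies. Both points do work out for this poset.
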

\begin{proof}
	One may verify this manually for $m,n \le 15$.
	Once this is done the result follows by induction on $m+n$
	owing to Proposition~\ref{prop:down}.
\end{proof}

This implies that the values of $E(m,n)$ satisfy a linear recurrence.
Thus it makes sense to introduce the roots of the corresponding
characteristic polynomial.
\begin{definition}
	Throughout this paper, let
	\begin{align*}
		\theta &= 82 + \sqrt{6697} \approx 163.8352 \\
		\ol{\theta} &= 82 - \sqrt{6697} \approx 0.1648
	\end{align*}
	be the two roots of the polynomial $t^2-164t+27$.
\end{definition}

Then, a direct computation
using the results of Appendix~\ref{app:smallcase}
allows us to compute explicit closed forms:
\begin{proposition}
	\label{prop:explicits}
	\allowdisplaybreaks
	We have the following twelve closed forms.
	\begin{align}
		E(5k+4, 5k+4) &=
			\frac{3025}{2\sqrt{6697}}\left(\theta^k-\ol{\theta}^k\right)
			+\frac{37}{2} \left( \theta^k + \ol{\theta}^k \right) \\
		E(5k+4, 5k+3) &=
			\frac{1883}{2\sqrt{6697}}\left(\theta^k-\ol{\theta}^k\right)
			+\frac{23}{2} \left( \theta^k + \ol{\theta}^k \right) \\
		E(5k+3, 5k+3) &=
			\frac{571}{\sqrt{6697}}\left(\theta^k-\ol{\theta}^k\right)
			+ 7 \left( \theta^k + \ol{\theta}^k \right) \\
		E(5k+3, 5k+2) &=
			\frac{741}{2\sqrt{6697}}\left(\theta^k-\ol{\theta}^k\right)
			+\frac{9}{2} \left( \theta^k + \ol{\theta}^k \right) \\
		E(5k+3, 5k+1) &=
			\frac{170}{\sqrt{6697}}\left(\theta^k-\ol{\theta}^k\right)
			+ 2 \left( \theta^k + \ol{\theta}^k \right) \\
		E(5k+2, 5k+2) &=
			\frac{401}{2\sqrt{6697}}\left(\theta^k-\ol{\theta}^k\right)
			+\frac{5}{2} \left( \theta^k + \ol{\theta}^k \right) \\
		E(5k+2, 5k+1) &=
			\frac{247}{2\sqrt{6697}}\left(\theta^k-\ol{\theta}^k\right)
			+\frac{3}{2} \left( \theta^k + \ol{\theta}^k \right) \\
		E(5k+1, 5k+1) &=
			\frac{77}{\sqrt{6697}}\left(\theta^k-\ol{\theta}^k\right)
			+ \left( \theta^k + \ol{\theta}^k \right) \\
		E(5k+1, 5k) &=
			\frac{93}{2\sqrt{6697}}\left(\theta^k-\ol{\theta}^k\right)
			+\frac{1}{2} \left( \theta^k + \ol{\theta}^k \right) \\
		E(5k, 5k+1) &=
			\frac{61}{2\sqrt{6697}}\left(\theta^k-\ol{\theta}^k\right)
			+\frac{1}{2} \left( \theta^k + \ol{\theta}^k \right) \\
		E(5k, 5k) &=
			\frac{77}{3\sqrt{6697}}\left(\theta^k-\ol{\theta}^k\right)
			+\frac{1}{3} \left( \theta^k + \ol{\theta}^k \right) \\
		E(5k, 5k-1) &=
			\frac{125}{6\sqrt{6697}}\left(\theta^k-\ol{\theta}^k\right)
			+\frac{1}{6} \left( \theta^k + \ol{\theta}^k \right) \\
		E(5k, 5k-2) &=
			\frac{16}{\sqrt{6697}}\left(\theta^k-\ol{\theta}^k\right).
	\end{align}
\end{proposition}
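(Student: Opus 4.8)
The plan is to recognize each of the stated closed forms as a solution of one and the same second-order linear recurrence, pinned down by two initial values.

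First I would fix an admissible residue pair and set $x_k = E(5k+r, 5k+s)$ for the appropriate small offsets $r,s$ (for instance $x_k = E(5k+1, 5k+1)$). Since admissibility is preserved under $(m,n) \mapsto (m+5, n+5)$, the preceding Lemma applies to each term $x_k$ for which $(5k+r, 5k+s)$ is admissible, yielding
\[
	x_{k+2} = 164\,x_{k+1} - 27\,x_k .
\]
The characteristic polynomial of this recurrence is exactly $t^2 - 164t + 27$, whose roots are $\theta$ and $\ol{\theta}$; hence every solution has the form $c_1 \theta^k + c_2 \ol{\theta}^k$. Using $\theta + \ol{\theta} = 164$, $\theta\ol{\theta} = 27$, and $\theta - \ol{\theta} = 2\sqrt{6697}$, I would rewrite this in the basis $\{\theta^k + \ol{\theta}^k,\ (\theta^k - \ol{\theta}^k)/\sqrt{6697}\}$, both of whose members are rational (indeed integer-valued). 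This is precisely the shape of the stated right-hand sides, and it makes the rational coefficients $A$ and $B$ transparent.

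Next, since both the claimed formula and the genuine sequence $x_k$ satisfy this same recurrence, it suffices to check that they agree at two consecutive indices; the recurrence then forces agreement for all larger $k$. So for each formula I would evaluate the proposed right-hand side at the two smallest admissible values of $k$, using $\theta^0 + \ol{\theta}^0 = 2$, $\theta^0 - \ol{\theta}^0 = 0$, and $\theta - \ol{\theta} = 2\sqrt{6697}$, and compare against the base-case values of $E$ recorded in Appendix~\ref{app:smallcase}. For the residue classes whose first coordinate is divisible by $5$, namely $E(5k,5k)$, $E(5k,5k-1)$, and $E(5k,5k-2)$, the index $k=0$ is not admissible---it would demand $E(0,0)$ or an entry with a nonpositive coordinate---so there I would instead anchor the verification at $k=1$ and $k=2$.

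The conceptual core is routine once the recurrence is in hand; the real work, and the likeliest place for an error to hide, is arithmetic bookkeeping. Computing the small values of $E$ in Appendix~\ref{app:smallcase} correctly, and then matching the two initial conditions for every one of the closed forms while keeping the degenerate classes properly anchored, is the main obstacle, rather than any single conceptual step.
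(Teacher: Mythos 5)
Your proposal is correct and follows essentially the same route as the paper: the paper likewise derives these closed forms by combining the recurrence $E(m+10,n+10) = 164\,E(m+5,n+5) - 27\,E(m,n)$ (whose characteristic polynomial $t^2 - 164t + 27$ has roots $\theta, \ol{\theta}$) with the base-case values tabulated in Appendix~\ref{app:smallcase}, which is exactly your ``solve the recurrence and match two initial conditions'' plan. Your explicit handling of the classes $E(5k,5k)$, $E(5k,5k-1)$, $E(5k,5k-2)$, where $k=0$ is unavailable and the verification must be anchored at $k=1$ and $k=2$, is a detail the paper leaves implicit but is handled correctly in your write-up.
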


\section{Computing the balance constant}
\label{sec:horror}
Throughout this section, we fix the poset $\PP = \PP(5k, 5k)$.
With Proposition~\ref{prop:explicits},
we now turn to estimating the balance constant of $\PP$.
The point is that Proposition~\ref{prop:explicits}
essentially lets us compute
$\pr_{\PP}(a_i \prec b_j)$ for any $i$ and $j$ already.
For example, we already have that
\begin{align*}
	\pr_{\PP} (a_{5k} \prec b_{5k})
	&= \frac{E(5k, 5k-1)}{E(5k, 5k)} \\
	&= \frac{\frac16 \left( \frac{125}{\sqrt{6697}}+1 \right)}%
	{\frac13 \left( \frac{77}{\sqrt{6697}} +1 \right)}
		\qquad\text{as } k \to \infty \\
	&= \frac{1}{32} \left( \sqrt{6697}-61 \right) \\
	&= 1 - \kappa.
\end{align*}
Thus our goal is to show the following.
\begin{proposition}
	\label{prop:goal}
	For any $i,j \in \left\{ 1, \dots, 5k \right\}$ we have
	\[ \min \left\{ \pr_{\PP} (a_i \prec b_j),
		\pr_{\PP} (a_j \prec b_i) \right\} \le \kappa. \]
\end{proposition}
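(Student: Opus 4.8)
The plan is to attack the statement through the single two-parameter family $p_{ij}\defeq\pr_\PP(a_i\prec b_j)$; the two comparison probabilities attached to a mixed pair $\{a_i,b_j\}$ are then $p_{ij}$ and its reverse $\pr_\PP(b_j\prec a_i)=1-p_{ij}$, whose minimum is exactly the balance contribution of that pair, so the proposition amounts to showing $p_{ij}\notin(\kappa,1-\kappa)$ for every $i,j$. (Same-chain pairs are comparable and contribute $0$, so only these mixed comparisons matter for $\delta(\PP)$.) The first input is a monotonicity that is free from the chain structure: because $b_j\prec b_{j+1}$ and $a_i\prec a_{i+1}$ hold in every linear extension, one has the event inclusions $\{a_i\prec b_j\}\subseteq\{a_i\prec b_{j+1}\}$ and $\{a_{i+1}\prec b_j\}\subseteq\{a_i\prec b_j\}$, so $p_{ij}$ is non-decreasing in $j$ and non-increasing in $i$. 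For fixed $i$ the values therefore climb monotonically from $0$ (while $b_j\le a_i$ is forced) to $1$ (once $a_i\le b_j$ is forced), and the claim $p_{ij}\notin(\kappa,1-\kappa)$ becomes the statement that this increasing sequence \emph{skips the interval}: there is a threshold $j_0=j_0(i)$ with $p_{i,j_0}\le\kappa$ and $p_{i,j_0+1}\ge 1-\kappa$.

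The second input is the self-duality of $\PP$. Define $\phi$ by $\phi(a_i)=b_{5k+1-i}$ and $\phi(b_j)=a_{5k+1-j}$; checking the covering relations by the residues modulo $5$ (as in the Remark) shows $\phi$ is an anti-automorphism, and reversing a uniformly random linear extension yields the identity $p_{ij}=\pr_\PP(a_i\prec b_j)=\pr_\PP(a_{5k+1-j}\prec b_{5k+1-i})=p_{(5k+1-j)(5k+1-i)}$. This lets me restrict to a fundamental domain for the reflection $(i,j)\mapsto(5k+1-j,5k+1-i)$; together with the $5$-periodicity of $\PP_\infty$ (so that admissibility and all relevant local data depend only on the residues of $i$ and $j$ modulo $5$) it collapses the infinitely many cells to a bounded list of residue types. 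It also reinterprets the value $p_{5k,5k}\to 1-\kappa$ computed above: its reflection is $p_{1,1}\to 1-\kappa$, and these diagonal cells are exactly the transition cells at which the two orientations of a pair are most nearly balanced.

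To evaluate $p_{ij}$ on these finitely many residue types I would make it explicit in terms of the extension counts. Splitting a linear extension according to the number $s$ of $a$'s placed before $b_j$ first appears, the extensions with $a_i\prec b_j$ are counted by $\sum_{s\ge i}E(s,j-1)\,F(s,j)$, where $F(s,j)$ enumerates the linear extensions of the upper sub-poset of $\PP_\infty$ on $\{a_{s+1},\dots,a_{5k},b_{j+1},\dots,b_{5k}\}$; by $5$-periodicity this upper piece is isomorphic to some $\PP(5k-s,5k-j)$ with shifted residues, so $F(s,j)$ is again controlled by the twelve closed forms of Proposition~\ref{prop:explicits}. Dividing by $E(5k,5k)$ and passing to $k\to\infty$ collapses every such ratio to an explicit expression in $\theta,\ol\theta$ and $\sqrt{6697}$, exactly as in the worked example $p_{5k,5k}=E(5k,5k-1)/E(5k,5k)\to 1-\kappa$. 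By the monotonicity above I only need these limiting values at the critical indices $j=j(i)$ where the sequence crosses $1/2$, a finite check once the residue types are listed.

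The proof then finishes by verifying, for each residue type, that the jump at the threshold clears the forbidden band, i.e. that $p_{i,j_0}\le\kappa$ while $p_{i,j_0+1}\ge 1-\kappa$ (the two being interchanged by $\phi$); monotonicity then gives $p_{ij}\notin(\kappa,1-\kappa)$ for all $i,j$, and hence $\delta(\PP)\le\kappa$. I expect the genuine obstacle to sit in the cells closest to the diagonal $i=j$, where $p_{ij}$ is nearest to $1-\kappa$ and the neighbouring value must still drop below $\kappa$ in a single step: this is the tightest comparison, and it is precisely here that the exact value of $\kappa$, through the irrationality $\sqrt{6697}$, has to be tracked to decide the inequality. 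The remaining work is bookkeeping rather than conceptual—pinning down the upper-piece count $F(s,j)$ with its residue shifts, and making the monotonicity-plus-periodicity reduction rigorous enough that a finite list of limits really does settle all $i$ and $j$—and this is the computation the section is organized around.
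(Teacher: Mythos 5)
Your plan is essentially the paper's own argument: reduce to residue classes modulo $5$, observe that each $a_i$ is sandwiched between nearby elements of the $b$-chain so that only a bounded window of $j$ gives a nontrivial comparison, split the extensions with $a_i \prec b_j$ into a lower piece $\PP(m,n)$ times an inverted upper copy of some $\PP(m',n')$ (your $F(s,j)=E(5k-j,5k-s)$ via the anti-automorphism, which is also how the paper's three-case linear-sum decomposition works), and evaluate the finitely many limiting ratios with Proposition~\ref{prop:explicits}, correctly identifying the diagonal cells, where the value tends to $1-\kappa$, as the tight case. The one point to repair in the write-up is that the sum $\sum_{s\ge i}E(s,j-1)F(s,j)$ must be restricted to those $s$ for which $\{a_1,\dots,a_s,b_1,\dots,b_{j-1}\}$ is a down-set of $\PP$ (for other $s$ the true count is zero while the product term is not), but this restriction is exactly the bounded window your sandwich observation already supplies.
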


\begin{proof}
We give the full proof of Proposition~\ref{prop:goal} only in
the case where $i \equiv 1 \pmod 5$,
since the other cases can be resolved in exactly the same fashion.
For notational convenience, we set
\begin{align*}
	i &= 5t+1 \\
	s &= k-t.
\end{align*}
We will assume $t > 0$,
since the $t = 1$ case corresponds to $\pr_{\PP} (a_1 \prec b_1)$
which is in any case equal to $\pr_{\PP} (b_{5n} \prec a_{5n})$ by symmetry.

Consider a linear extension $\prec$ of $\PP$ then.
Since $b_{5t-1} \le a_{5t+1} \le b_{5t+2}$,
we have three distinct possibilities.

\subsection{Case $b_{5t-1} \prec a_{5t+1} \prec b_{5t}$}
Then if we add the relation
$b_{5t-1} \le a_{5t+1} \le b_{5t}$ to $\PP$,
the resulting poset is isomorphic to
the linear sum of $\PP(5t, 5t-1)$
and an inverted copy of $\PP(5s+1, 5s-1)$.
An example with $(k,t) = (3,1)$ is shown in Figure~\ref{fig:cut}.

\begin{figure}[ht]
	\centering
	\includegraphics{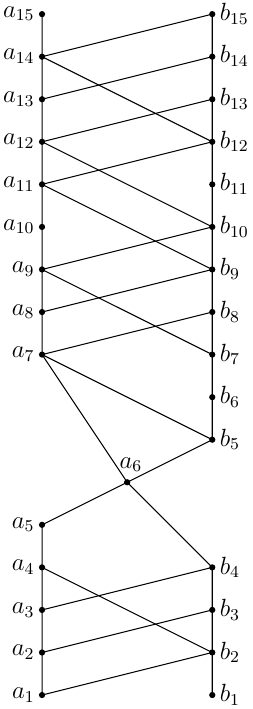}
	\caption{Adding the condition $b_4 \le a_5 \le b_5$ to $\PP(15,15)$.}
	\label{fig:cut}
\end{figure}

The number of linear extensions in this case is then
\begin{align*}
	E(5t, 5t-1) E\left( 5s+1, 5s-1 \right)
	&= E(5t, 5t-1) E\left( 5s, 5s-1 \right) \\
	&= \left[ \frac16 \left( \frac{125}{\sqrt{6997}}+1 \right)
		+ o(1) \right]^2 \theta^k \\
	&= \left[ \frac{11161+125\sqrt{6697}}{12 \cdot 6697} + o(1) \right] \theta^k \\
	&\approx (0.17745 + o(1)) \theta^k.
\end{align*}

\subsection{Case $b_{5t} \prec a_{5t+1} \prec b_{5t+1}$}
Then if we add the relation
$b_{5t} \le a_{5t+1} \le b_{5t+1}$ to $\PP$,
the resulting poset is isomorphic to
the linear sum of $\PP(5t, 5t)$
and an inverted copy of $\PP(5s, 5s-1)$.

The number of linear extensions in this case is then
$E(5t, 5t) E(5s, 5s-1)$, which equals
\begin{align*}
	&\quad E(5t,5t) E(5s,5s-1) \\
	&= \left[ \frac13 \left( \frac{77}{\sqrt{6997}}+1 \right) + o(1) \right]
	\left[ \frac16 \left( \frac{125}{\sqrt{6997}}+1 \right) + o(1) \right]
	\theta^k \\
	&= \left[ \frac{8161 + 101\sqrt{6697}}{9 \cdot 6697} + o(1) \right] \theta^k \\
	&\approx (0.27253 + o(1)) \theta^k.
\end{align*}

\subsection{Case $b_{5t+1} \prec a_{5t+1} \prec b_{5t+2}$}
Then if we add the relation
$b_{5t} \le a_{5t+1} \le b_{5t+1}$ to $\PP$,
the resulting poset is isomorphic to
the linear sum of $\PP(5t, 5t+1)$
and an inverted copy of $\PP(5s-1, 5s-1)$.

Thus the number of linear extensions
in this case is equal to
\begin{align*}
	&\quad E(5t, 5t+1) E(5s-1, 5s-1) \\
	&= \left[ \frac12 \left( \frac{61}{\sqrt{6997}}+1 \right) + o(1) \right]
	\left[ \frac12 \left( \frac{3025}{\sqrt{6997}}+37 \right) + o(1) \right] \theta^{k-1} \\
	&= \left[ \frac{1411+15\sqrt{6697}}{2 \cdot 6697} + o(1) \right] \theta^k \\
	&\approx \left[ 0.19699 + o(1) \right] \theta^k.
\end{align*}

\subsection{Collating the cases}
On the other hand, the total number of linear extension of $\PP$ is
\begin{align*} E(5t, 5t)
	&= \left[ \frac13\left( \frac{77}{\sqrt{6697}}+1 \right)
		+ o(1) \right] \theta^k \\
	&\approx (0.64697 + o(1)) \theta^k.
\end{align*}
So, division gives
\begin{align*}
	\pr_\PP\left( b_{5t-1} \prec a_{5t+1} \prec b_{5t} \right)
	&= \frac16\left( -\frac{29}{\sqrt{6697}} + 2 \right) + o(1) \\
		&\approx 0.27427+o(1) \\
	\pr_\PP\left( b_{5t} \prec a_{5t+1} \prec b_{5t+1} \right)
		&= \frac16 \left( \frac{125}{\sqrt{6697}}+1 \right) + o(1) \\
		&\approx 0.42124 + o(1) \\
	\pr_\PP\left( b_{5t+1} \prec a_{5t+1} \prec b_{5t+2} \right)
		&= \frac12 \left( \frac{-32}{\sqrt{6697}}+1 \right) + o(1) \\
		&\approx 0.30449 + o(1).
\end{align*}
It follows that
\[ \min \left( \pr_{\PP}(a_{5t+1} \prec b_j),
	\pr_{\PP}(b_j \prec a_{5t+1}) \right)
	< \frac 13 < \kappa \]
for $j \in \{5t, 5t+1\}$, assuming $t > 0$.
Hence it holds for all $j$,
since for $j \notin \{5t, 5t+1\}$ the left-hand side vanishes.

This completes the proof of Proposition~\ref{prop:goal}
when $i \equiv 1 \pmod 5$;
the other four cases are analogous.
\end{proof}

\appendix
\section{Examples of values}
\label{app:smallcase}

The following table lists the values of $E(m,n)$
for $\max(m,n) \le 15$
(except for $E(0,0)$ undefined).
The pairs $(m,n)$ which are admissible are bolded.

\[
\begin{array}{r|rrrrr rrrrr r}
n = & 0 & 1 & 2 & 3 & 4 & 5 & 6 & 7 & 8 & 9 & 10 \\ \hline
m=0 &   & \mathbf{1} & 1 & 1 & 1 & 1 & 1 & 1 & 1 & 1 & 1 \\
m=1 & \mathbf{1} & \mathbf{2} & 2 & 2 & 2 & 2 & 2 & 2 & 2 & 2 & 2 \\
m=2 & 1 & \mathbf{3} & \mathbf{5} & 5 & 5 & 5 & 5 & 5 & 5 & 5 & 5 \\
m=3 & 1 & \mathbf{4} & \mathbf{9} & \mathbf{14} & 14 & 14 & 14 & 14 & 14 & 14 & 14 \\
m=4 & 1 & 4 & 9 & \mathbf{23} & \mathbf{37} & 37 & 37 & 37 & 37 & 37 & 37 \\
m=5 & 1 & 4 & 9 & \mathbf{32} & \mathbf{69} & \mathbf{106} & \mathbf{143} & 143 & 143 & 143 & 143 \\
m=6 & 1 & 4 & 9 & 32 & 69 & \mathbf{175} & \mathbf{318} & 318 & 318 & 318 & 318 \\
m=7 & 1 & 4 & 9 & 32 & 69 & 175 & \mathbf{493} & \mathbf{811} & 811 & 811 & 811 \\
m=8 & 1 & 4 & 9 & 32 & 69 & 175 & \mathbf{668} & \mathbf{1479} & \mathbf{2290} & 2290 & 2290 \\
m=9 & 1 & 4 & 9 & 32 & 69 & 175 & 668 & 1479 & \mathbf{3769} & \mathbf{6059} & 6059 \\
m=10 & 1 & 4 & 9 & 32 & 69 & 175 & 668 & 1479 & \mathbf{5248} & \mathbf{11307} & \mathbf{17366} \\
m=11 & 1 & 4 & 9 & 32 & 69 & 175 & 668 & 1479 & 5248 & 11307 & \mathbf{28673} \\
m=12 & 1 & 4 & 9 & 32 & 69 & 175 & 668 & 1479 & 5248 & 11307 & 28673 \\
m=13 & 1 & 4 & 9 & 32 & 69 & 175 & 668 & 1479 & 5248 & 11307 & 28673 \\
m=14 & 1 & 4 & 9 & 32 & 69 & 175 & 668 & 1479 & 5248 & 11307 & 28673 \\
m=15 & 1 & 4 & 9 & 32 & 69 & 175 & 668 & 1479 & 5248 & 11307 & 28673
\end{array}
\]

\[
\begin{array}{r|rrrrr}
n = & 11 & 12 & 13 & 14 & 15 \\ \hline
m=0 & 1 & 1 & 1 & 1 & 1 \\
m=1 & 2 & 2 & 2 & 2 & 2 \\
m=2 & 5 & 5 & 5 & 5 & 5 \\
m=3 & 14 & 14 & 14 & 14 & 14 \\
m=4 & 37 & 37 & 37 & 37 & 37 \\
m=5 & 328 & 365 & 402 & 439 & 476 \\
m=6 & 318 & 318 & 318 & 318 & 318 \\
m=7 & 811 & 811 & 811 & 811 & 811 \\
m=8 & 2290 & 2290 & 2290 & 2290 & 2290 \\
m=9 & 6059 & 6059 & 6059 & 6059 & 6059 \\
m=10 & \mathbf{23425} & 29484 & 35543 & 41602 & 47661 \\
m=11 & \mathbf{52098} & 52098 & 52098 & 52098 & 52098 \\
m=12 & \mathbf{80771} & \mathbf{132869} & 132869 & 132869 & 132869 \\
m=13 & \mathbf{109444} & \mathbf{242313} & \mathbf{375182} & 375182 & 375182 \\
m=14 & 138117 & 242313 & \mathbf{617495} & \mathbf{992677} & 992677 \\
m=15 & 166790 & 242313 & 859808 & \mathbf{1852485} & \mathbf{2845162}
\end{array}
\]

\bibliographystyle{hplain}
\bibliography{refs}

\end{document}